\newtheorem{theorem}{Theorem}
\newtheorem{proposition}{Proposition}
\newcommand{\N}{{\mathbb N}}
\DeclareMathOperator{\ho}{H} % naive height
\DeclareMathOperator{\sep}{sep}
\DeclareMathOperator{\abssep}{abs\,sep}
\begin{document}

\title{{\Large{\bf Absolute root separation}}}

\author{Yann Bugeaud, Andrej Dujella, Wenjie Fang,\\ Tomislav
Pejkovi\'{c}, and Bruno Salvy}

\date{\today}
\maketitle

\begin{abstract}
The absolute separation of a polynomial is the minimum nonzero
difference between the absolute values of its roots. In the case of
polynomials
with integer coefficients, it can be bounded from below in terms of the
degree and the height (the maximum absolute value of the coefficients)
of the polynomial. We improve the known bounds for this problem and
related ones. Then we report on extensive experiments in low degrees,
suggesting that the current bounds are still very pessimistic.
\end{abstract}

\section{Separation and absolute separation}
The \emph{absolute separation} of a polynomial $P\in\mathbb{C}[X]$
is
the minimal nonzero distance between the absolute values of its
complex roots:
\[ \abssep(P):=\min_{\substack{P(\alpha)=P(\beta)=0,\\ |\alpha|\neq|\beta|}}\big||\alpha|-|\beta|\big|. \]
Having good lower bounds on this quantity for polynomials with
integer coefficients is of interest in the
asymptotic analysis of linear recurrent sequences.

To the best of our knowledge, the first published bound on this
problem~\cite{GourdonSalvy1996} was $\abssep(P)\gg \ho(P)^{-d
(d^2+2d-1)/2}$ where, here and below, the constant implicit in the $\gg$ sign depends
only on the degree~$d$, while $\ho(P)$, the \emph{height} of the
polynomial~$P$, is the maximum of the absolute values of its
coefficients. This exponent was later~\cite{DubickasSha2015,Sha2019}
improved
to~$-d^3/2+d^2+d/2-2$ and even more recently~\cite{Dubickas2019} to
$-d^3/2+d^2+d/2-1$.
In this work, we improve this exponent and that of related problems.
Still, we do not know how far the exponent we obtain is from
being
optimal.
Thus an important part of this article is devoted to experiments in
low degree, from where we can infer families of polynomials
exhibiting a behaviour in $\ho(P)^{-d-1}$ for $d\in\{3,4,5,6\}$.

In the much more classical case of the separation
\[ \sep(P):=\min_{\substack{P(\alpha)=P(\beta)=0,\\
\alpha\neq\beta}}|\alpha-\beta|, \]
the best available bound for a polynomial with integer coefficients
goes back to Mahler~\cite{Mahler1964}:
\begin{equation}\label{eq:Mahler}
\sep(P)\gg \ho(P)^{-d+1}.
\end{equation}
Even in that case, the tightness of the exponent $-d+1$ is still
unknown, with best known upper bounds $-(2d-1)/3$ for general $d$ and
$-2$ for $d=3$~\cite{BugeaudDujella2014,Evertse2004,Schonhage2006}.
(A consequence noted by Mahler is that the right-hand side
of~\eqref{eq:Mahler} also
gives a lower bound on
the absolute value of the imaginary parts of nonreal roots of~$P$.)

This work consists of two parts. In the first one, we improve the
known exponents for the absolute separation  and related problems with
the following.
\begin{theorem}\label{thm:main}Let $P\in\mathbb{Z}[X]$ be a polynomial
of degree~$d$
and let $\alpha$ and $\beta$ be two of its roots such that
$|\alpha|\neq|\beta|$, then
\begin{enumerate}
	\item if $\alpha$ and $\beta$ are real, then
	$\bigl||\alpha|-|\beta|\bigr|\gg \ho(P)^{-(d-1)}$;
	\item if $\alpha$ is real and $\beta$ is not, then
	$\bigl||\alpha|-|\beta|\bigr|\gg \ho(P)^{-2(d-1)(d-2)}$;
	\item if neither of them is real, then
	$\bigl||\alpha|-|\beta|\bigr|\gg \ho(P)^{-(d-1)(d-2)(d-3)/2}.$
\end{enumerate}
\end{theorem}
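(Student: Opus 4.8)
The plan is to reduce all three cases to a single arithmetic principle. First I would use that $\big||\alpha|-|\beta|\big|$ is controlled by $\big||\alpha|^2-|\beta|^2\big|$, since $\big||\alpha|-|\beta|\big| = \big||\alpha|^2-|\beta|^2\big|/(|\alpha|+|\beta|)$ and $|\alpha|+|\beta|\ll \ho(P)$ (all roots of $P$ are $\ll\ho(P)$); so it suffices to bound $\big||\alpha|^2-|\beta|^2\big|$ from below and give up one factor of $\ho(P)$. The gain is that $|\alpha|^2$ is an algebraic quantity built from the roots of $P$: if $\alpha$ is real then $|\alpha|^2=\alpha^2$, while if $\alpha$ is not real then $\bar\alpha$ is also a root of $P$ and $|\alpha|^2=\alpha\bar\alpha$ is the product of a conjugate pair of roots. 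Consequently $|\alpha|^2$ and $|\beta|^2$ are roots of explicit integer polynomials obtained from $P$ by symmetric-function (resultant) constructions: the ``square'' polynomial $G$ with roots $\alpha_i^2$, of degree $d$ and Mahler measure $\mo(G)=\mo(P)^2$, and the ``pairwise-product'' polynomial $\Pi$ with roots $\alpha_i\alpha_j$ $(i<j)$, of degree $\binom d2$ and Mahler measure $\mo(\Pi)\le\mo(P)^{d-1}$.

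Next I would record the arithmetic tool. If $\gamma$ and $\delta$ are roots of coprime integer polynomials $A$ and $B$ with $\gamma\ne\delta$, then $\res(A,B)$ is a nonzero integer, so $|\res(A,B)|\ge 1$; expanding $\res(A,B)$ as leading coefficients times $\prod_{i,j}(\gamma_i-\delta_j)$ and isolating the single factor $\gamma-\delta$, while bounding every other factor $|\gamma_i-\delta_j|$ by $2\max(1,|\gamma_i|)\max(1,|\delta_j|)$ and collecting these into Mahler measures, yields $|\gamma-\delta|\gg \mo(A)^{-\deg B}\mo(B)^{-\deg A}$. When $\gamma,\delta$ are two roots of the same polynomial $R$, the analogous computation with $|\disc(R)|\ge1$ is exactly Mahler's separation bound $\sep(R)\gg \mo(R)^{-(\deg R-1)}\gg \ho(R)^{-(\deg R-1)}$.

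With this in hand the three cases differ only in which auxiliary polynomials host $|\alpha|^2$ and $|\beta|^2$. In case~(1), both quantities are $\pm$ roots of $P$ itself, so $\big||\alpha|-|\beta|\big|$ is $|\alpha-\beta|$ or $|\alpha+\beta|$; the first is $\ge\sep(P)\gg\ho(P)^{-(d-1)}$ by Mahler, and the second (a sum of two real roots, occurring for opposite signs) I would bound to the same order through the resultant of $P(X)$ and $P(-X)$, whose nonvanishing integer value isolates $|\alpha+\beta|$. In case~(2), $|\alpha|^2=\alpha^2$ is a root of $G$ and $|\beta|^2=\beta\bar\beta$ is a root of $\Pi$, so the two-polynomial bound with $\deg G=d$, $\mo(G)=\ho(P)^2$, $\deg\Pi=\binom d2$, $\mo(\Pi)\le\ho(P)^{d-1}$ gives the stated order $\ho(P)^{-2(d-1)(d-2)}$. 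In case~(3), both $|\alpha|^2$ and $|\beta|^2$ are roots of $\Pi$, so this is a separation problem inside $\Pi$; the decisive refinement is to replace $\Pi$ by the smaller factor whose roots are only those products $\alpha_i\alpha_j$ that avoid the four distinguished roots $\alpha,\bar\alpha,\beta,\bar\beta$, cutting the effective degree from $\binom d2$ down to $\binom{d-2}2$ and producing the exponent $(d-1)(d-2)(d-3)/2$.

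The step I expect to be the main obstacle is exactly this exponent control, not the existence of the auxiliary polynomials. Applying Mahler's separation bound naively to $\Pi$ would multiply the measure exponent $d-1$ by the full degree $\binom d2$ and lose a factor of order $d$, so the improvement hinges on (i) using the sharp measure estimate $\mo(\Pi)\le\mo(P)^{d-1}$ rather than a crude product over all pairs, and (ii) the degree reduction that discards the pairwise products meeting $\{\alpha,\bar\alpha,\beta,\bar\beta\}$. A secondary but necessary difficulty is the degenerate configurations in which the relevant resultants or discriminants vanish---coincidences such as $\alpha_i=-\alpha_j$ (making $G$ inseparable) or $\alpha_i^2=\alpha_k\alpha_l$ (making $G$ and $\Pi$ share a root)---which break coprimality and must be handled by passing to squarefree parts or by treating the offending factors separately, checking that they do not degrade the bound.
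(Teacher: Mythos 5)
Your overall strategy---realize $|\alpha|^2$ and $|\beta|^2$ as roots of auxiliary integer polynomials built symmetrically from the roots of $P$, then use an integrality argument to bound a nonzero difference from below---is the same as the paper's, but the concrete implementation has gaps that prevent it from reaching the stated exponents. First, your opening reduction divides $\bigl||\alpha|^2-|\beta|^2\bigr|$ by $|\alpha|+|\beta|\ll\ho(P)$ and explicitly ``gives up one factor of $\ho(P)$''; that already yields exponents one worse than claimed in every case. The paper avoids this loss: when $|\alpha|+|\beta|>2$ it runs the argument on the reciprocal polynomial $X^dP(1/X)$ (same height), for which the conversion factor $\frac{1}{|\alpha||\beta|}\bigl(\frac1{|\alpha|}+\frac1{|\beta|}\bigr)$ is bounded by an absolute constant. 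Second, in case~2 the numbers do not come out: with $\deg G=d$, $\mo(G)\ll \ho(P)^2$, $\deg\Pi=\binom d2$, $\mo(\Pi)\ll \ho(P)^{d-1}$, your own resultant bound gives $\mo(G)^{-\deg\Pi}\mo(\Pi)^{-\deg G}=\ho(P)^{-2d(d-1)}$, not $\ho(P)^{-2(d-1)(d-2)}$. The inflation comes from the diagonal factors $\alpha_k^2-\alpha_i\alpha_j$ with $k\in\{i,j\}$ that $\res(G,\Pi)$ forces you to carry; the paper instead forms the single symmetric product $a_d^{2(d-1)(d-2)}\prod_{i<j,\,k\notin\{i,j\}}\bigl(X-(\alpha_k^2-\alpha_i\alpha_j)\bigr)$, whose degree in each root is only $2(d-1)(d-2)$.

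The most serious problem is in case~3: the ``smaller factor of $\Pi$ whose roots avoid $\alpha,\bar\alpha,\beta,\bar\beta$'' is in general not an integer (or even rational) polynomial, because that subset of the products $\alpha_i\alpha_j$ is not Galois-stable; no resultant or discriminant formed from such a factor is an integer, so the integrality argument collapses exactly at the step you identify as decisive. (As written it also discards the two roots $\alpha\bar\alpha$ and $\beta\bar\beta$ you are trying to separate.) What actually works is again a bespoke symmetric construction: the product of $X-(\alpha_i\alpha_j-\alpha_k\alpha_\ell)^2$ over \emph{disjoint} pairs $\{i,j\}$, $\{k,\ell\}$, which is symmetric in all the roots of $P$, has degree $(d-1)(d-2)(d-3)$ in each $\alpha_m$, hence integer coefficients of height $\ll \ho(P)^{(d-1)(d-2)(d-3)}$, and has $(|\alpha|^2-|\beta|^2)^2$ among its roots; the Cauchy bound then applies directly to its smallest nonzero root. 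Your closing concern about degenerate configurations (vanishing resultants, inseparable $G$ or $\Pi$) is legitimate for your route, but it disappears once one abandons the resultants of the full $G$ and $\Pi$ in favour of these targeted symmetric products.
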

We proved a more precise version of the first bound in a previous
work~\cite{BugeaudDujellaPejkovicSalvy2017}, where we showed that the
exponent of $\ho(P)$ is optimal in that case. More detailed but less
precise bounds for the second case can be found in~%
\cite[Lemma~2.5]{DubickasSha2015}, \cite[Lemma~3.6]{MelczerSalvy2016}
and~\cite[Lemma~53]{MelczerSalvy2019}.
Note that the third case requires $d\ge4$ to be meaningful, since a
cubic
polynomial with real coefficients cannot have two non-real roots
with distinct absolute values.

The proof of Theorem~\ref{thm:main} is based on
constructing
auxiliary polynomials with
integer coefficients of controlled height whose roots contain the
desired difference. This is a very versatile approach. We illustrate
it to rederive Mahler's exponent $-d+1$ in Section~\ref{sec:Mahler}.
Next, we apply it to the three cases covered in Theorem~\ref{thm:main}.
Similar bounds for the difference between the real or the imaginary
parts of roots of integer polynomials are derived in
Sections~\ref{sec:realpart} and~\ref{sec:impart}.

In the second part of this work (Section~\ref{sec:experiments}), we describe
experiments leading to
lower bounds on the absolute separation for small degrees, which can
be summarized as follows.
\begin{theorem}
For each $d\in\{3,4,5,6\}$, there exists a sequence~$(P_
{d,M})$ of polynomials of degree~$d$ in~$\mathbb{Z}[X]$, such that
as $M\rightarrow\infty$, the polynomial $P_{d,M}$ has two roots $\alpha_M,\beta_M$ with
$|\alpha_M|\neq|\beta_M|$ and
\[\bigl||\alpha_M|-|\beta_M|\bigr|\ll\ho(P_{d,M})^{-d-1},\quad
M\rightarrow\infty.\]
\end{theorem}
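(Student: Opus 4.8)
The plan is to treat this as an explicit construction. For each $d\in\{3,4,5,6\}$ I would exhibit a one-parameter family $P_{d,M}\in\Z[X]$ whose coefficients are given as explicit polynomials in the integer parameter $M$, and then establish the estimate by a direct asymptotic analysis of its roots as $M\to\infty$. The families are the genuine output of the experiments: for each degree and each height bound I would search the integer polynomials realizing the smallest absolute separation, and then interpolate the extremal examples found into a single family with polynomial dependence on $M$. A feature already dictated by the shape of the target exponent is that these polynomials must be non-monic with a large leading coefficient; this is what decouples the height from the size of the roots and is what makes an exponent as strong as $-(d+1)$ possible at all.

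To see what must be arranged, and to guide the search, I would first reduce the quantity to be estimated. Writing the two distinguished roots as a real root $\alpha$ and a non-real root $\beta$ — a configuration I would build into each family, the two-non-real case being handled analogously through $|\beta_1|^2-|\beta_2|^2=\beta_1\bar\beta_1-\beta_2\bar\beta_2$ — one has
\[
\bigl||\alpha|-|\beta|\bigr|=\frac{\bigl|\,|\alpha|^2-|\beta|^2\,\bigr|}{|\alpha|+|\beta|}=\frac{|\alpha^2-\beta\bar\beta|}{|\alpha|+|\beta|},
\]
so the task is to make the numerator $\alpha^2-\beta\bar\beta$ extremely small. An appropriate norm/resultant of this nonzero algebraic number — the same kind of auxiliary integer polynomial that underlies the proof of Theorem~\ref{thm:main} — is, up to a power of the leading coefficient, a nonzero integer, hence at least $1$ in absolute value, while its remaining conjugate factors and the leading coefficient themselves grow with the height. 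The design principle for the family is therefore to pin this integer invariant at its minimal value while letting the conjugate factors and the leading coefficient grow as fast as the height permits; balancing these rates against the denominator $|\alpha|+|\beta|$ is exactly what yields the exponent $-(d+1)$. For $d=3$ this coincides with the second bound of Theorem~\ref{thm:main}, so the family is provably optimal in that degree.

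With a candidate family in hand, the verification proceeds in three steps. First I would certify the root structure for all large $M$: from the sign pattern of the coefficients, the sign of the discriminant, or a Sturm/Descartes count, I would show that $P_{d,M}$ has the intended numbers of real and non-real roots and, in particular, that the two distinguished roots have distinct absolute values, so that $\abssep(P_{d,M})\neq0$. Second, I would compute the two relevant roots as asymptotic expansions in $M$, either by a Newton-polygon/Puiseux analysis or by perturbing from the degenerate seed polynomial obtained as $M\to\infty$, carrying the expansions to high enough order to capture the leading term of the small difference $|\alpha_M|^2-|\beta_M|^2$. Third, reading off $\ho(P_{d,M})\asymp M^{q}$ from the coefficient formulas and $\bigl||\alpha_M|-|\beta_M|\bigr|\asymp M^{-p}$ from the expansion, I would check that $p/q=d+1$, which gives the claimed $\ll\ho(P_{d,M})^{-d-1}$.

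The main obstacle lies in the construction rather than the bookkeeping, and it is twofold. First, the family must be engineered so that the leading contributions to $\alpha^2-\beta\bar\beta$ cancel to exactly order $d+1$ and no less: a generic perturbation produces only a much weaker exponent, so the arithmetic near-cancellation has to be built in deliberately, degree by degree. Second, because this cancellation is precisely a loss of several leading orders, the root expansions must be pushed correspondingly far before the surviving term appears, and the resulting estimates must be shown to hold uniformly for all sufficiently large $M$ rather than merely observed numerically; certifying the root count together with the strict inequality $|\alpha_M|\neq|\beta_M|$ uniformly in $M$ is the delicate point.
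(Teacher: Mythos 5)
Your overall strategy --- exhibit explicit one-parameter families and certify the exponent by asymptotic expansion of the roots in the perturbation parameter --- is indeed the paper's strategy. But as written the proposal is a plan, not a proof: the theorem is an existence statement whose entire content is the explicit families, and you never produce them. Everything after ``with a candidate family in hand'' is routine, as you acknowledge; the non-routine part, which you defer to an unspecified search-and-interpolate step, is exactly where the work lies, and its success is not guaranteed. The paper's own further experiments show that the analogous construction fails to reach exponent $-(d+1)$ in degree $8$ (the system obtained by trying to cancel one more coefficient has no integer solutions) and that the cyclotomic variant only reaches $-d$ in odd degrees; so the existence of integer families achieving $-(d+1)$ for $d\in\{3,4,5,6\}$ is a degree-by-degree arithmetic fact that must be exhibited, not assumed. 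Concretely, the paper takes $P_{d,M}=M R_d(X)-Q_d(X)$, where $R_d$ is a product of cyclotomic factors (degrees $4$, $6$) or of quadratics $X^2+aX+r^2$ with roots of equal modulus (degree $5$, with $R_5=(X^2-9X+36)(X^2-11X+36)$), and $Q_d$ is a specific integer polynomial, e.g.\ $Q_4=X^3-X^2+X-5$, found by forcing the first $d$ coefficients of the expansion of $|X_1(1/M)|^2-|X_2(1/M)|^2$ in powers of $1/M$ to vanish; since $\ho(P_{d,M})\asymp M$ and the surviving term has order $M^{-d-1}$, the exponent follows.

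Two further points. First, your design principle --- pin the integer norm of $\alpha^2-\beta\bar\beta$ at its minimal value while the conjugate factors grow --- is a reasonable heuristic for why extremal examples might exist, but it is not a construction and it is not the mechanism used: the paper starts from a seed polynomial whose distinguished roots have \emph{exactly} equal absolute values and engineers the perturbation so that the difference of squared moduli cancels through order $d$. Second, your template of a family whose coefficients are polynomials in $M$ does not cover degree $3$: there the seed $P(X,Y)$ needs the irrational value $Y=\sqrt3$ to have all three roots of equal modulus, the perturbation in $Y$ only cancels the difference to order $\epsilon^2$, and the missing two orders are recovered by Diophantine approximation, taking $Y=p_n/q_n$ a convergent of $\sqrt3$ so that $\epsilon\ll q_n^{-2}$ while $\ho(P_n)\asymp q_n$ --- which is why the heights in that family grow exponentially in the index, unlike in degrees $4$, $5$, $6$. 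Finally, your assertion that a large leading coefficient is \emph{necessary} for the exponent $-(d+1)$ is unjustified (though it does hold in all of the paper's examples).
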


Thus apart from its first part, $d=3$ is the 
only case where we know Theorem~\ref{thm:main} to be optimal.
It is interesting to note that, in our examples, the growth of $H(P_{3,M})$ is exponential in $M$, 
while that of $H(P_{d,M})$, for $d\in\{4,5,6\}$, is linear in $M$.

\section{General bounds from symmetric functions of roots}
Here and in the rest of this article, we consider a polynomial $P
(X)\in\mathbb{Z}[X]$,
\[P(X) = \sum_{i=0}^d a_i X^i=a_d\prod_{i=1}^d(X-\alpha_i), \] of degree
 $d$ ($a_d\neq0$) 
with coefficients of absolute value bounded
by~$H$ and complex roots $\alpha_1,\dots,\alpha_d$. 

Our bounds on
various types of separations that are asymptotic in the height of the
polynomial are obtained from the following two classical results, of
which we sketch the proofs for completeness.
\begin{proposition}[Effective Version of the Fundamental Theorem
of Symmetric Functions]\cite[Thm.~6.21]{Yap2000}\label{prop1} Let $P
(X)=\sum_{i=0}^
d{a_iX^i}=a_d\prod_{i=1}^d
{(X-\alpha_i)}\in\mathbb{Z}[a_0,\dots,a_d,X]$ and let $G\in\mathbb{Z}
[X_1,\dots,X_d]$
be a
symmetric polynomial of degree at most~$k$ in each $X_i$. Then
$a_d^kG
(\alpha_1,\dots,\alpha_d)$ is a polynomial of total degree at most~$k$
in~$
\mathbb{Z}[a_0,\dots,a_d]$.
\end{proposition}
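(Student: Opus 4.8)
The plan is to combine the classical (non-effective) fundamental theorem of symmetric functions with a careful bookkeeping of degrees. First I would invoke the fact that $G$, being symmetric with integer coefficients, can be written as $G=\tilde G(e_1,\dots,e_d)$ for a unique polynomial $\tilde G\in\mathbb{Z}[Y_1,\dots,Y_d]$, where $e_1,\dots,e_d$ denote the elementary symmetric polynomials in $X_1,\dots,X_d$. The entire content of the effective statement is then a single degree bound: I claim that the \emph{total} degree of $\tilde G$ in the variables $Y_j$ is at most~$k$. Once this is established, the conclusion follows by a routine substitution using Vieta's formulas.

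The key step, and the main obstacle, is proving this total-degree bound, since the usual proof of the fundamental theorem only asserts existence. I would obtain it from the standard constructive proof, tracking degrees throughout. Order monomials lexicographically and let $X_1^{b_1}\cdots X_d^{b_d}$ be the leading monomial of $G$; symmetry forces $b_1\ge b_2\ge\cdots\ge b_d$, and the hypothesis gives $b_1\le k$. The reduction step subtracts an integer multiple of the product $e_1^{\,b_1-b_2}e_2^{\,b_2-b_3}\cdots e_d^{\,b_d}$, whose leading monomial is exactly $X_1^{b_1}\cdots X_d^{b_d}$, leaving a symmetric polynomial $G'$ with strictly smaller leading monomial. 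Two observations make the induction run: (i) the total $Y$-degree of the subtracted product is $(b_1-b_2)+(b_2-b_3)+\cdots+b_d=b_1\le k$; and (ii) because each $e_j$ is squarefree and hence has degree~$1$ in every variable, this product has degree at most $b_1\le k$ in each $X_i$, so $G'$ again has degree at most~$k$ in each~$X_i$. The induction on the leading monomial terminates, and every term it produces has total $Y$-degree at most~$k$, whence $\deg\tilde G\le k$.

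Finally I would substitute the roots. By Vieta's formulas, $a_d\,e_j(\alpha_1,\dots,\alpha_d)=(-1)^j a_{d-j}$, a polynomial of degree~$1$ in $a_0,\dots,a_d$. Writing $\tilde G=\sum_{\mathbf c}\lambda_{\mathbf c}\,Y_1^{c_1}\cdots Y_d^{c_d}$ with $\lambda_{\mathbf c}\in\mathbb{Z}$ and $s=c_1+\cdots+c_d\le k$, each term of $a_d^k G(\alpha_1,\dots,\alpha_d)$ contributes
\[
a_d^{k}\,\lambda_{\mathbf c}\prod_{j=1}^d e_j(\alpha)^{c_j}
=\lambda_{\mathbf c}\,a_d^{\,k-s}\prod_{j=1}^d\bigl((-1)^j a_{d-j}\bigr)^{c_j},
\]
which lies in $\mathbb{Z}[a_0,\dots,a_d]$ and has total degree $(k-s)+s=k$. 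Summing over $\mathbf c$ shows that $a_d^k G(\alpha_1,\dots,\alpha_d)$ is an integer polynomial in the coefficients of total degree at most~$k$, as required.
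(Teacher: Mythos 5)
Your proof is correct and rests on essentially the same mechanism as the paper's: the lex-leading monomial of $e_1^{\mu_1}\cdots e_d^{\mu_d}$ is $X_1^{\mu'_1}\cdots X_d^{\mu'_d}$ with $\mu'_j=\sum_{i\ge j}\mu_i$, so the total degree $\sum_i\mu_i=\mu'_1$ in the elementary symmetric polynomials is controlled by the $X_1$-degree (hence by $k$), and integrality comes from the monic triangular change of basis. The only difference is organizational: you run the classical subtraction algorithm, obtaining existence, integrality and the degree bound in a single induction (and carefully maintaining the per-variable degree invariant), whereas the paper first establishes spanning over $\Q$ by a dimension count against the monomial symmetric functions and then integrality by the same triangularity, with the Vieta substitution built into its variables $b_i=a_{d-i}/a_d$ rather than performed at the end as you do.
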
This is a standard result on symmetric
functions~\cite[Ch.~1]{Macdonald1995}. We now give an elementary
self-contained proof.

\begin{proof}
Let $b_i = a_{d-i} / a_d$ for $1 \leq i \leq d$. These are
algebraically independent symmetric functions of the $\alpha_j$'s. We
need to prove that
$G(\alpha_1, \ldots, \alpha_d)$ is a polynomial of total degree at
most $k$ in $\mathbb{Z}[b_1, \ldots, b_d]$. By linearity, it is
sufficient to consider the case where $G$ is homogeneous
of total degree $k$. Let $\Lambda_k$ be the linear space of such symmetric polynomials in $\alpha_1, \ldots, \alpha_d$.

Since $b_i$ is of total degree $i$ in the $\alpha_j$'s, all products of the form
\[ e_\mu = b_1^{\mu_1} b_2^{\mu_2} \dotsm b_{d}^{\mu_{d}} \]
indexed by $\mu = (\mu_i)$ satisfying $\sum_{i=1}^{d} i \mu_i = k$ are
linearly independent and in~$\Lambda_k$. Such a product has total
degree $\sum_{i=1}^{d} \mu_i \leq k$ in the $b_i$'s. However, $\Lambda_k$ is also linearly generated by the monomials of the form
\[ m_\lambda = \sum_{\sigma \in S_d} \alpha_{\sigma(1)}^{\lambda_1} \cdots \alpha_{\sigma(d)}^{\lambda_d} \]
indexed by $\lambda = (\lambda_j)$ with $\lambda_1 \geq \lambda_2 \geq
\cdots \geq \lambda_d$ and $\sum_{j=1}^{d} \lambda_j = k$. The
dimension is given by the number of distinct such $(\lambda_j)$, which
is also the number of $(\mu_i)$, where each $\mu_i = \lambda_i - \lambda_{i+1}$. Therefore, the $e_\mu$'s also
linearly span $\Lambda_k$ and $G$ can be written as a linear
combination of $e_\mu$'s with rational coefficients.

To prove that the coefficients are integers, we order the monomials in
$\alpha_j$'s lexicographically. 
Since $b_i$ is the sum of $\prod_{j \in S} \alpha_j$ over all subsets
$S$ of $\{1,\dots,d\}$ with $i$ elements,
%Since $b_i = \sum_{S \subset [d]}\prod_{j \in S} \alpha_j$, 
the largest monomial in $e_\mu$ is given by
$\alpha_1^{\mu'_1} \cdots \alpha_d^{\mu'_d}$, where $\mu'_j = \sum_
{i=j}^{d} \mu_i$. The map $\mu\mapsto\mu' = 
(\mu'_j)$ is a bijection, which implies that every $e_\mu$ has
a distinct monic leading
monomial and all other monomials have integral coefficients.
Induction in the lexicographical order finally shows that the
coefficients of $G$ in the basis $(e_\mu)_\mu$ are all integers.
\end{proof} 

\begin{proposition}[Cauchy Bound]\cite[Thm.~4.2~(ii)]{Mignotte1992a} 
\label{lem:cauchy}\label{prop2}
  Let $P(X) = \sum_{i=0}^d a_i X^i\in\mathbb{Z}[X]$ with $|a_i|\le
  H$ for all~$i$  and let
  $\alpha\neq0\in\mathbb{C}$ be one of its roots. Then
  \[
    |\alpha|\ge\frac{1}{1+H}.
  \]
\end{proposition}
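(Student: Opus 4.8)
The plan is to work directly from the defining relation $P(\alpha)=0$ and to exploit the integrality of the coefficients, avoiding any appeal to the reciprocal polynomial. First I would dispose of the easy regime: since $\alpha\neq0$ forces $\deg P\ge1$, the leading coefficient $a_d$ is a nonzero integer, so $H\ge|a_d|\ge1$ and therefore $1/(1+H)\le1/2$. Consequently, if $|\alpha|\ge1$ the claimed bound holds trivially, and it remains only to treat the case $|\alpha|<1$.

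For that case, let $m$ be the smallest index with $a_m\neq0$; such an index exists because $P$ is not the zero polynomial. Dividing $P(\alpha)=0$ by $\alpha^m$, which is legitimate since $\alpha\neq0$, isolates the lowest term:
\[
 a_m=-\sum_{i=m+1}^{d}a_i\,\alpha^{\,i-m}.
\]
The key step is then a two-sided squeeze on $|a_m|$. On one hand, $a_m$ is a nonzero integer, so $|a_m|\ge1$. On the other hand, the triangle inequality together with $|a_i|\le H$ and $|\alpha|<1$ gives
\[
 |a_m|\le H\sum_{i=m+1}^{d}|\alpha|^{\,i-m}\le H\sum_{k\ge1}|\alpha|^{\,k}=\frac{H\,|\alpha|}{1-|\alpha|}.
\]
Combining the two inequalities yields $1-|\alpha|\le H|\alpha|$, that is, $(1+H)|\alpha|\ge1$, which is exactly the desired bound.

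I expect no serious obstacle here, as the result is classical; the only point demanding a little care is that the constant term $a_0$ may vanish, so one cannot simply isolate $a_0$. Passing to the lowest nonzero coefficient $a_m$ (equivalently, factoring out the largest power of $X$ dividing $P$ and noting that $\alpha$ is a root of the quotient, whose coefficients are still bounded by $H$) repairs this and is precisely what makes the integrality bound $|a_m|\ge1$ available. A cosmetic variant of the same argument instead applies the classical Cauchy upper bound to the reversed polynomial $X^dP(1/X)$, one of whose roots is $1/\alpha$, but the direct computation above keeps the proof self-contained.
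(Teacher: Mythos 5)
Your proof is correct and follows essentially the same route as the paper: assume $|\alpha|<1$, bound the lowest nonzero coefficient (an integer of absolute value at least $1$) by the geometric series $H|\alpha|/(1-|\alpha|)$, and rearrange. The paper simply says ``we may assume $a_0\neq0$'' where you make the reduction to the lowest nonzero coefficient $a_m$ explicit; this is the same argument.
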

\begin{proof}
We may assume $|\alpha| < 1$, otherwise the result is obvious, and $a_0 \not= 0$. Then, we get
$$
1 \le |a_0| \le \sum_{i=1}^d |a_i| \cdot |\alpha^i| \le H \, \frac{|\alpha|}{1 - |\alpha|},
$$
giving at once the lower bound $|\alpha|\ge\frac{1}{1+H}$.
\end{proof}

\subsection{Motivating example: Mahler's bound}\label{sec:Mahler}
Mahler's bound is usually deduced from Hadamard's bound on
Sylvester's matrix applied to the discriminant of the polynomial. When
only the asymptotic exponent of~$\ho(P)$ in the estimate is needed,
Propositions~\ref{prop1} and~\ref{prop2} are sufficient. Indeed,
consider the polynomial
\[M(X)=a_d^{2(d-1)}\prod_{i<j}(X-(\alpha_i-\alpha_j)^2).\]
It is symmetric in the $\alpha_i$'s, with degree in each $\alpha_i$
that is
twice the number of $j\neq i$, i.e., $2(d-1)$. Thus, by Proposition~%
\ref{prop1}, the polynomial $M(X)$ has integer coefficients of
height bounded by~$cH^{2(d-1)}$ for some constant~$c$ that depends
only on~$d$. By Proposition~\ref{prop2}, we thus
get that for any~$(i,j)$ such that $\alpha_i\neq\alpha_j$, 
\[|\alpha_i-\alpha_j|^2\ge\frac{1}{1+cH^{2
(d-1)}}.\]
Choosing $(i,j)$ that minimizes $|\alpha_i-\alpha_j|$ and taking
square roots thus gives
\[\sep(P)\gg H^{-(d-1)},\]
recovering the exponent in Mahler's bound.

This method could be refined to produce a bound rather than an
asymptotic estimate, by a more precise analysis of the coefficients of
the polynomial involved in Proposition~\ref{prop1}. 

Note also that the polynomial obtained by considering the roots
$\alpha_i+\alpha_j$ in place of $\alpha_i-\alpha_j$ in $M$ satisfies
the same bounds. It follows that the sum of non-opposite
roots of $P$ is also
lower bounded by $H^{-(d-1)}$, giving part~1 of
Theorem~\ref{thm:main}.

\subsection{Absolute real-complex gap}
Part 2 of the theorem is obtained by considering the polynomial
\[  R(X) = a_d^{2(d-1)(d-2)} \prod_{i<j, k \notin \{i, j\}} \left(X
-
(\alpha_k^2 - \alpha_i \alpha_j)\right).\]
This polynomial is symmetric in the $\alpha_i$'s, of degree $2
(d-1)(d-2)$ in each of them.
Thus by Propositions~\ref{prop1}
and~\ref{prop2}, its smallest nonzero root has magnitude at least $H^
{-2(d-1)(d-2)}$. In particular, if $\alpha$ is a real root of $P$ and
$\beta$ a nonreal root such that $|\alpha|\neq|\beta|$, taking
$\alpha_k=\alpha$, $\alpha_i=\beta$ and $\alpha_j=\overline\beta$
gives
\[|\alpha^2-|\beta|^2|\gg H^{-2(d-1)(d-2)}.\]
If $|\alpha|+|\beta|\le2$, then dividing both terms gives a
similar inequality for $\bigl||\alpha|-|\beta|\bigr|$. 

Otherwise, it is only in the case when
$||\alpha|-|\beta||\le 1$ that we need a lower bound. 
Supposing first that $|\beta| \geq |\alpha|$, then we have
$|\alpha|+1\geq|\beta|$ and thus $|\alpha| +
|\alpha| + 1 \ge
|\alpha| + |\beta| > 2$ and so $|\alpha| > 1/2$ while $2|\beta| \geq
|\alpha| + |\beta| > 2$ gives $|\beta| > 1$. Thus, $1/|\alpha\beta| <
2$ and $1/|\alpha| + 1/|\beta|  < 3$. The same bounds are obtained
when supposing 
$|\alpha| \geq |\beta|$. 
Now,
since $\alpha^{-1}$ and
$\beta^{-1}$ are roots of the
reciprocal
polynomial~$X^dP(1/X)$ that has the same height as $P$, we also have
\[|\alpha^{-2}-|\beta|^{-2}|=\frac{\bigl||\alpha|-|\beta|\bigr|}
{|\alpha||\beta|}\left(\frac1{|\alpha|}+\frac1{|\beta|}\right)\gg H^
{-2(d-1)(d-2)},\]
whence the conclusion by dividing out by the factor smaller than~6.

\subsection{Absolute complex-complex gap}
We now analyse the polynomial
\[
  S(X) = a_d^{(d-1)(d-2)(d-3)} \prod_{\substack{i<j,\\ k<\ell,\\
  \{i,j\}\cap
  \{k,\ell\}=\emptyset}} \left(X^{1/2} - (\alpha_i\alpha_j -
  \alpha_k\alpha_\ell)\right).
\]
Exchanging the pairs $(i,j)$ and $(k,\ell)$ shows that this is indeed
a polynomial in~$X$. It is invariant under permutations of
the $\alpha_i$'s. Its degree in $X$ is $\sim d^4/4$, but its degree in
each of the $\alpha_i$'s is only $(d-1)(d-2)(d-3)$, corresponding to
all the possible choices of the other $\alpha_j$'s. 
If $\alpha$ and $\beta$ are two non-real roots of~$P$ with
$|\alpha|\neq|\beta|$, then taking $\alpha_i=\alpha$,
$\alpha_j=\overline\alpha$, $\alpha_k=\beta$,
$\alpha_\ell=\overline\beta$ and using Propositions~\ref{prop1}
and~\ref{prop2} again gives
\[(|\alpha|^2-|\beta|^2)^2\gg H^{-(d-1)(d-2)(d-3)}.\]
Taking square roots divides the exponent by~2, and then with the help of the reciprocal polynomial $X^d P(X^{-1})$, the
same argument as in the case of absolute real-complex gap leads to
\[
\abssep(P)\gg H^{-(d-1)(d-2)(d-3)/2},
\]
which concludes the proof of Theorem~\ref{thm:main}.

\subsection{Gap on the real parts of roots}\label{sec:realpart}
The same approach gives bounds on gaps between real parts of roots.
There are again three cases: real-real, real-complex, complex-complex.
The real-real case is simply the corresponding case in root
separation, which is already known. 
For the real-complex case, we consider the following polynomial
\[
  T_1(X) = a_d^{\frac{3}{2}(d-1)(d-2)} \prod_{i<j, k \notin \{i,j\}} 
  \bigl(X - (\alpha_i + \alpha_j - 2\alpha_k)\bigr),
\]
while the complex-complex case relies on
\[
  T_2(X) = a_d^{(d-1)(d-2)(d-3)} \prod_{\substack{i<j,\\ k<\ell,\\
  \{i,j\}\cap
  \{k,\ell\}=\emptyset}} \left(X^{1/2} - (\alpha_i + \alpha_j -
  \alpha_k -
  \alpha_\ell)\right).
\]
The analysis is as before. Both~$T_1$ and~$T_2$ can be seen to be
polynomials in~$X$. They are symmetric polynomials in the
$\alpha_i$'s. Their degrees in the $\alpha_i$'s is the one used in
the
exponent of~$a_d$. 
The smallest nonzero root of $T_1$ divided by 2 is a lower bound of
the real-complex real-part gap, while $T_2$ gives the complex-complex
case. Propositions~\ref{prop1} and~\ref{prop2} then give the
following.
\begin{theorem}Let $P\in\mathbb{Z}[X]$ be a polynomial of
degree~$d$ and let
$\alpha$ and $\beta$ be two of its roots such that
$\Re\alpha\neq\Re\beta$, then
\[|\Re\alpha-\Re\beta|\gg\begin{cases}
\ho(P)^{-(d-1)},\quad&\text{if $\Im\alpha=\Im\beta=0$,}\\
\ho(P)^{-3(d-1)(d-2)/2},\quad&\text{if $\Im\alpha=0$,}\\
\ho(P)^{-(d-1)(d-2)(d-3)/2},\quad&\text{otherwise.}
\end{cases}\]
\end{theorem}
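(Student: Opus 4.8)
The plan is to mirror exactly the construction used for the absolute-separation theorem, but with the relevant differences of real parts encoded as the roots of suitable auxiliary polynomials. The guiding principle is: for a gap quantity $g$ that we want to bound from below, find a symmetric polynomial $T$ in the $\alpha_i$'s whose roots include $g$ (or a simple function of $g$), estimate the degree of $T$ in each $\alpha_i$, invoke Proposition~\ref{prop1} to conclude $T$ has integer coefficients of height $\ll H^{m}$ where $m$ is that per-variable degree, and then apply Proposition~\ref{prop2} (the Cauchy bound) to the smallest nonzero root of $T$.

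\medskip

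I would treat the three cases in turn. The real-real case is nothing new: since $\Re\alpha=\alpha$ and $\Re\beta=\beta$ when both are real, $|\Re\alpha-\Re\beta|=|\alpha-\beta|\ge\sep(P)\gg H^{-(d-1)}$ by Mahler's bound of Section~\ref{sec:Mahler}. For the real-complex case, take $\alpha$ real and $\beta=x+iy$ nonreal, so $\overline\beta=x-iy$ is also a root; then $\alpha+\overline\beta-2\cdot\text{(the third index)}$ is not quite what we want, so instead I observe that with $\alpha_i=\beta$, $\alpha_j=\overline\beta$, $\alpha_k=\alpha$ the factor of $T_1$ becomes $\beta+\overline\beta-2\alpha=2(\Re\beta-\alpha)=2(\Re\beta-\Re\alpha)$. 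Thus $2|\Re\alpha-\Re\beta|$ is a root of $T_1$. The degree of $T_1$ in each $\alpha_i$ is $\tfrac32(d-1)(d-2)$ (each index can play the role of $i$, $j$, or $k$, and counting the multiplicities of the three slots gives this value), so Proposition~\ref{prop1} bounds its height by $\ll H^{3(d-1)(d-2)/2}$, and Proposition~\ref{prop2} gives $2|\Re\alpha-\Re\beta|\gg H^{-3(d-1)(d-2)/2}$, whence the stated bound after dividing by $2$. For the complex-complex case, with $\alpha,\beta$ nonreal and $\alpha_i=\alpha$, $\alpha_j=\overline\alpha$, $\alpha_k=\beta$, $\alpha_\ell=\overline\beta$, the factor of $T_2$ reads $X^{1/2}-(\alpha+\overline\alpha-\beta-\overline\beta)=X^{1/2}-2(\Re\alpha-\Re\beta)$, so $X=4(\Re\alpha-\Re\beta)^2$ is a root; the per-variable degree $(d-1)(d-2)(d-3)$ gives height $\ll H^{(d-1)(d-2)(d-3)}$, and after applying Proposition~\ref{prop2} to this root and taking a square root (halving the exponent) one obtains $|\Re\alpha-\Re\beta|\gg H^{-(d-1)(d-2)(d-3)/2}$.

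\medskip

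The two verifications I expect to require the most care are (i) checking that $T_1$ and $T_2$ are genuinely polynomials in $X$ and symmetric in the $\alpha_i$'s, and (ii) computing the per-variable degrees precisely. For $T_2$ the only subtlety is that the factor involves $X^{1/2}$, but the product is taken over \emph{unordered} pairs of disjoint index-pairs $\{i,j\}$ and $\{k,\ell\}$, and swapping the two pairs negates $\alpha_i+\alpha_j-\alpha_k-\alpha_\ell$; pairing each term with its swap produces $X-(\cdots)^2$, so the product is a polynomial in $X$ — exactly as argued for $S(X)$ in the complex-complex absolute-gap case. For $T_1$, the factor $\alpha_i+\alpha_j-2\alpha_k$ is visibly unchanged under $i\leftrightarrow j$ and the product ranges over all choices with $k\notin\{i,j\}$, making the whole product symmetric. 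The degree bookkeeping is the genuinely delicate step, since the three differently-weighted roles (the coefficient $2$ on $\alpha_k$ and the factor $\tfrac32$ in the exponent) must be reconciled; I would count, for a fixed index $r$, how many factors contain $\alpha_r$ and with what degree, summing the contributions from $r$ appearing as $i$, $j$, or $k$. Once these degree counts confirm the exponents appearing in the powers of $a_d$, Proposition~\ref{prop1} and the Cauchy bound deliver all three estimates mechanically, and the proof concludes.
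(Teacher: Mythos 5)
Your proposal is correct and follows the paper's own argument exactly: the same auxiliary polynomials $T_1$ and $T_2$, the same substitutions ($\alpha_i=\beta$, $\alpha_j=\overline\beta$, $\alpha_k=\alpha$ for the real--complex case and $\alpha_i=\alpha$, $\alpha_j=\overline\alpha$, $\alpha_k=\beta$, $\alpha_\ell=\overline\beta$ for the complex--complex case), the same per-variable degree counts matching the exponents of $a_d$, and the same appeal to Propositions~\ref{prop1} and~\ref{prop2}. You also correctly note that, unlike the absolute-separation case, no reciprocal-polynomial step is needed here since the root of $T_2$ is directly $4(\Re\alpha-\Re\beta)^2$ with no extraneous factor to divide out.
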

\subsection{Gap on the imaginary parts of roots}\label{sec:impart}
The situation for imaginary parts is similar. First, if one of the
roots is real and the other is not, then as already mentioned,
Mahler's bound applies to the imaginary part.
If both roots are
nonreal but not conjugates, then the same polynomial $T_2$ as in the
case of the real parts can be used, with $(\alpha_i,\alpha_k)$ and $
(\alpha_j,\alpha_\ell)$ taking the roles of conjugate roots. In the
case when one of the roots is purely imaginary, then the following
generalization of~$T_1$ can be used
\[
  T_3(X) = a_d^{{3}(d-1)(d-2)} \prod_{i<j, k \notin \{i,j\}} 
  \bigl(X - (\alpha_i - \alpha_j - 2\alpha_k)^2\bigr).
\]
This discussion
leads to the following.
\begin{theorem}Let $P\in\mathbb{Z}[X]$ be a polynomial of
degree~$d$ and let
$\alpha$ and $\beta$ be two of its roots such that
$\Im\alpha\neq\Im\beta$, then
\[|\Im\alpha-\Im\beta|\gg\begin{cases}
\ho(P)^{-(d-1)},\quad&\text{if $\Im\alpha=0$,}\\
\ho(P)^{-3(d-1)(d-2)/2},\quad&\text{if $\Re\alpha=0$,}\\
\ho(P)^{-(d-1)(d-2)(d-3)/2},\quad&\text{otherwise.}
\end{cases}\]
\end{theorem}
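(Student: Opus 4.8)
The plan is to follow exactly the template used for the absolute separation and real parts: split according to the three cases and, in each, exhibit an auxiliary polynomial in~$X$ whose coefficients are symmetric functions of the $\alpha_i$ of controlled degree, so that Propositions~\ref{prop1} and~\ref{prop2} bound its smallest nonzero root from below. First I would dispose of the real--nonreal case $\Im\alpha=0$. Here $\Im\beta\neq0$, so $\beta$ and $\overline\beta$ are two distinct roots of $P$ with $|\beta-\overline\beta|=2|\Im\beta|=2|\Im\alpha-\Im\beta|$, and Mahler's bound $\sep(P)\gg H^{-(d-1)}$ yields the first line at once. The same remark disposes of every degenerate configuration that can occur in the other two cases: whenever the two roots are conjugate, or one is purely imaginary while the other is real, or $\beta=-\alpha$, the quantity $|\Im\alpha-\Im\beta|$ equals, up to the factor~$2$, the modulus of a difference of two roots of $P$, and Mahler's bound gives an even stronger estimate than claimed. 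So it remains to treat the two generic configurations.

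For the complex--complex case (last line), both $\alpha$ and $\beta$ are non-real and, having discarded the conjugate sub-case, the four roots $\alpha,\overline\alpha,\beta,\overline\beta$ are distinct, which forces $d\ge4$. I would reuse the polynomial $T_2$ from the real-part discussion, now letting $(\alpha_i,\alpha_k)$ and $(\alpha_j,\alpha_\ell)$ play the roles of the two conjugate pairs, say $\alpha_i=\alpha,\ \alpha_k=\overline\alpha,\ \alpha_j=\overline\beta,\ \alpha_\ell=\beta$. Then the inner form is $\alpha_i+\alpha_j-\alpha_k-\alpha_\ell=(\alpha-\overline\alpha)-(\beta-\overline\beta)=2i(\Im\alpha-\Im\beta)$, so $-4(\Im\alpha-\Im\beta)^2$ appears among the roots of $T_2$. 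As $T_2$ is symmetric of degree $(d-1)(d-2)(d-3)$ in each root, Propositions~\ref{prop1} and~\ref{prop2} give $4(\Im\alpha-\Im\beta)^2\gg H^{-(d-1)(d-2)(d-3)}$, and extracting a square root yields the exponent $-(d-1)(d-2)(d-3)/2$. Note that, in contrast with the absolute-separation argument, the root value is directly proportional to $(\Im\alpha-\Im\beta)^2$ rather than to a product of two distinct factors, so no passage to the reciprocal polynomial is needed here.

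For the purely imaginary case $\Re\alpha=0$ (second line), we have $\alpha=i\Im\alpha$ and $\overline\alpha=-\alpha$, and after discarding the degenerate sub-cases $\beta$ is non-real with $\beta,\overline\beta,\alpha$ distinct. I would use $T_3$ with $\alpha_i=\beta,\ \alpha_j=\overline\beta,\ \alpha_k=\alpha$, so that $\alpha_i-\alpha_j-2\alpha_k=(\beta-\overline\beta)-2\alpha=2i\Im\beta-2i\Im\alpha=-2i(\Im\alpha-\Im\beta)$, whose square $-4(\Im\alpha-\Im\beta)^2$ is a root of $T_3$. A direct count gives the degree of $T_3$ in each $\alpha_i$ as $3(d-1)(d-2)$ — the factors in which that index lies in the pair contribute $2(d-1)(d-2)$, those in which it is the index $k$ contribute $(d-1)(d-2)$ — matching the exponent of $a_d$. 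Propositions~\ref{prop1} and~\ref{prop2} then yield $4(\Im\alpha-\Im\beta)^2\gg H^{-3(d-1)(d-2)}$, hence the exponent $-3(d-1)(d-2)/2$ after a square root.

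The main obstacle is the very first step in each generic case: confirming that the auxiliary polynomial really is a polynomial in~$X$ whose coefficients are symmetric in the $\alpha_i$, so that Proposition~\ref{prop1} applies. For $T_2$ this is the pair-exchange observation already used for the real parts, whereby $X^{1/2}$ is promoted to $X$. For $T_3$ it is more delicate, because the inner form $\alpha_i-\alpha_j-2\alpha_k$ is antisymmetric under the exchange of $\alpha_i$ and $\alpha_j$ while the range $i<j$ fixes one orientation; one must verify carefully that, after squaring and forming the product over the chosen index set, the result is genuinely invariant under all permutations of the roots and carries the announced degree. This symmetry bookkeeping, rather than any of the ensuing estimates — which are immediate consequences of Propositions~\ref{prop1} and~\ref{prop2} — is where the real work lies.
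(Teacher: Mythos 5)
Your three-way decomposition and the auxiliary polynomials you pick are exactly those of the paper: Mahler's bound via $\beta-\overline\beta=2i\Im\beta$ when $\Im\alpha=0$, the polynomial $T_2$ with the conjugate pairs placed as $(\alpha_i,\alpha_k)=(\alpha,\overline\alpha)$, $(\alpha_j,\alpha_\ell)=(\overline\beta,\beta)$ in the generic complex case, and $T_3$ when $\alpha$ is purely imaginary. Your treatment of the degenerate subcases, your degree counts, and your remark that no reciprocal-polynomial step is needed are all correct, and the first and third cases of the theorem are fully proved by your argument.

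The one step you explicitly defer --- verifying that $T_3$ is symmetric in the $\alpha_i$'s so that Proposition~\ref{prop1} applies --- is, however, precisely the step that fails as stated. Transposing $\alpha_i$ and $\alpha_j$ turns the factor $(\alpha_i-\alpha_j-2\alpha_k)^2$ into $(\alpha_i-\alpha_j+2\alpha_k)^2$, a genuinely different quantity unless $\alpha_k(\alpha_i-\alpha_j)=0$; already for $d=3$ the multiset $\{(\alpha_1-\alpha_2-2\alpha_3)^2,\,(\alpha_1-\alpha_3-2\alpha_2)^2,\,(\alpha_2-\alpha_3-2\alpha_1)^2\}$ is not preserved by swapping $\alpha_1$ and $\alpha_2$, so the coefficients of $T_3$ are not symmetric functions of the roots and Proposition~\ref{prop1} gives no control on them. (The paper's own text has the same defect: it merely asserts that $T_3$ can be used.) The obvious symmetric repair, with factors $X-\bigl(4\alpha_k^2-(\alpha_i-\alpha_j)^2\bigr)$, keeps the degree $3(d-1)(d-2)$ in each root, but the relevant root then becomes $4\alpha^2-(\beta-\overline\beta)^2=4(\Im\beta-\Im\alpha)(\Im\beta+\Im\alpha)$, so one only bounds the product of the target difference with $\Im\alpha+\Im\beta$; estimating that extra factor by $O(H)$ loses $H^{-1}$ in the exponent, while symmetrizing over ordered pairs instead doubles the degree. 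Either way the announced exponent $-3(d-1)(d-2)/2$ is not recovered without a further idea, so the purely imaginary case remains open in your write-up exactly where you located the ``real work.''
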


\section{Experiments and bounds in low degree}\label{sec:experiments}
As already mentioned, even in the case of Mahler's bound, the
tightness of the exponent is unknown. The situation is similar for the
bounds obtained in the previous section. 

We now
turn to
experiments that lead to lower bounds on the asymptotic separation.
In order to obtain such an asymptotic result, we search for families
of polynomials exhibiting a small absolute separation. 
In particular, we would like to approach a tight estimate in the case
of low degrees, as a first step for a better understanding of the
actual growth of these bounds with the degree. 
We use two complementary types of experiments. 

\paragraph{Exhaustive search} First, we perform an
exhaustive search for polynomials of small absolute separation given a
degree and a bound on the height. More
specifically, given a degree, we search through all polynomials
with integer coefficients within
a given height, in case a pattern can be discerned in the
polynomials with small absolute separation. In some cases indeed, the
roots of these polynomials seem to concentrate in certain locations,
letting us refine the experiment and search more closely. This is
successful in degree~3, where we find a family of polynomials letting
us prove the tightness of the exponent~$-4$ in that case 
(Proposition~\ref{prop:deg3} below). However, even if some time is
saved by
taking into account various symmetries, the number of
polynomials to be tested is too large for this approach to be
used for large height and even more so for large
degree. 

\paragraph{Perturbations}
While these exhaustive searches are purely numerical, our
second type of experiments relies heavily on symbolic computation. 
In degree~$d$, we consider polynomials of the form
\[
	P_d(X,\epsilon) = R_d(X) + \epsilon Q_d(X).
\]
Here, for a given $r \in \N^+$, $R_d(X)$ is a polynomial with
several roots of absolute value~$r$, having factors of the type
$X \pm r$ and $X^2 + aX + r^2$, with $a$ an integer such that
$|a|<2r$. The polynomial $Q_d(X)$ is subject to $\max(\deg
(R_d),\deg(Q_d)) = d$ so that $P_d(X,\epsilon)$ has degree $d$ in~$X$.
For
$X_1
(\epsilon)$ and $X_2(\epsilon)$ two roots of $P_d(X,\epsilon)$ such that
$X_1(0)$ and $X_2(0)$ are roots of selected factors of~$R_d$, we
compute a series expansion of these roots of~$P_d$ in powers
of~$\epsilon$, from which we deduce
\[
    |X_i(\epsilon)|^2 = \sum_{k \geq 0} c_{i,k} \epsilon^k.
\]
This computation is purely symbolic, meaning that the $c_{i,k}$'s
are computed as explicit polynomials in the coefficients of $R_d$
and $Q_d$. We then look for integer solutions of the polynomial
system $\{ c_{1,k} = c_{2,k}, 1 \leq k \leq h-1\}$ that do not satisfy
$c_{1,h}=c_{2,h}$. Such
solutions give a polynomial $P_d(X,\epsilon)$ such that $|X_2
(\epsilon)|^2 -
|X_1(\epsilon)|^2$ is not zero but is $O(\epsilon^{h})$, so
that the polynomial $P_d^*(X) = M P_d(X, M^{-1})$ with integer $M$ has
integer coefficients and  
absolute separation $O(\ho(P_d^*)^{-h})$.

\paragraph{Results} The rest of this section reports on these
experiments. The
results of an exhaustive search are first presented in~\S%
\ref{sec:exhaust}. Next, we discuss a family of polynomials of
degree~3 proving the tightness of the exponent~$-4$ by an argument we
have been unable to generalize. In degrees~4, 5 and~6, the
perturbative approach lets us find exponents~$-5,-6,-7$. 

\subsection{Exhaustive search for small degree and height}
\label{sec:exhaust}

\begin{table}
% \begin{tabular}{ccrlcc}
% \hline\\[-2.2ex]
% degree,&separation&$\hfill P(X)\hfill$&separation&$-\frac{\log
% \operatorname{sep}}{\log \ho(P)}$&asymptotic\\[-1ex]
% max height&type&&&&bound\\
% \hline
% 3,10&$|\alpha-\beta|$&$5X^3+8X^2-9X+2$&1.421e-2&1.94&2\\
% 3,20&$|\alpha-\beta|$&$14X^3+17X^2-13X+2$&4.938e-3&1.87&2\\
% 4,10&$|\alpha-\beta|$&$3X^4-9X^3-10X^2+7X-1$&4.187e-3&2.38&3\\
% 4,20&$|\alpha-\beta|$&$9X^4-13X^3-14X^2+17X-4$&5.974e-4&2.62&3\\
% 5,10&$|\alpha-\beta|$&$9X^5+X^4-4X^3-9X^2-3X+7$&4.656e-4&3.49&4\\
% \hline 
% 3,10&$||\alpha|-|\beta||$&$10X^3-3X^2-2X+3$&5.394e-4&3.27&4\\
% 3,20&$||\alpha|-|\beta||$&$17X^3-9X^2-7X+8$&1.233e-5&3.99&4\\
% 4,10&$||\alpha|-|\beta||$&$X^4-6X^3-7X^2+5X+6$&2.276e-6&6.68&12\\
% 4,20&$||\alpha|-|\beta||$&$5X^4-17X^3-20X^2+11X+12$&1.034e-7&5.37&12\\
% 5,10&$||\alpha|-|\beta||$&$9X^5-5X^4-4X^3-2X^2-2X-9$&1.459e-7&7.16&24\\
% \hline 
% 3,10&$|\Re\alpha-\Re\beta|$&$7X^3+5X^2+5X+1$&5.952e-4&3.82&3\\
% 3,20&$|\Re\alpha-\Re\beta|$&$19X^3+8X^2+15X+2$&2.218e-5&3.64&3\\
% 4,10&$|\Re\alpha-\Re\beta|$&$9X^4+5X^3-X^2+5X-1$&1.472e-6&6.11&9\\
% 4,20&$|\Re\alpha-\Re\beta|$&$13X^4+3X^3+5X^2+19X-7$&1.669e-7&5.30&9\\
% 5,10&$|\Re\alpha-\Re\beta|$&$7X^5-6X^4-6X^3-5X^2+X+1$&2.511e-7&7.81&18\\
% \hline 
% 3,10&$|\Im\alpha-\Im\beta|$&$10X^3+6X^2-6X+1$&2.403e-2&1.62&4\\
% 3,20&$|\Im\alpha-\Im\beta|$&$19X^3+9X^2-19X+5$&5.082e-3&1.79&4\\
% 4,10&$|\Im\alpha-\Im\beta|$&$10X^4+X^3+10$&6.250e-5&4.20&9\\
% 4,20&$|\Im\alpha-\Im\beta|$&$20X^4+X^3+20$&7.813e-6&3.93&9\\
% 5,10&$|\Im\alpha-\Im\beta|$&$5X^5-8X^4+6X^3+5X^2-5X+8$&1.061e-7&7.72&18
% \\
% \hline
% \end{tabular}
\begin{tabular}{ccrl}
\hline\\[-2.2ex]
degree,&separation&$\hfill P(X)\hfill$&separation\\[-1ex]
max height&type&&\\
\hline
3,10&$|\alpha-\beta|$&$5X^3+8X^2-9X+2$&1.421e-2\\
3,20&$|\alpha-\beta|$&$14X^3+17X^2-13X+2$&4.938e-3\\
4,10&$|\alpha-\beta|$&$3X^4-9X^3-10X^2+7X-1$&4.187e-3\\
4,20&$|\alpha-\beta|$&$9X^4-13X^3-14X^2+17X-4$&5.974e-4\\
5,10&$|\alpha-\beta|$&$9X^5+X^4-4X^3-9X^2-3X+7$&4.656e-4\\
\hline 
3,10&$||\alpha|-|\beta||$&$10X^3-3X^2-2X+3$&5.394e-4\\
3,20&$||\alpha|-|\beta||$&$17X^3-9X^2-7X+8$&1.233e-5\\
4,10&$||\alpha|-|\beta||$&$X^4-6X^3-7X^2+5X+6$&2.276e-6\\
4,20&$||\alpha|-|\beta||$&$5X^4-17X^3-20X^2+11X+12$&1.034e-7\\
5,10&$||\alpha|-|\beta||$&$9X^5-5X^4-4X^3-2X^2-2X-9$&1.459e-7\\
\hline 
3,10&$|\Re\alpha-\Re\beta|$&$7X^3+5X^2+5X+1$&5.952e-4\\
3,20&$|\Re\alpha-\Re\beta|$&$19X^3+8X^2+15X+2$&2.218e-5\\
4,10&$|\Re\alpha-\Re\beta|$&$9X^4+5X^3-X^2+5X-1$&1.472e-6\\
4,20&$|\Re\alpha-\Re\beta|$&$13X^4+3X^3+5X^2+19X-7$&1.669e-7\\
5,10&$|\Re\alpha-\Re\beta|$&$7X^5-6X^4-6X^3-5X^2+X+1$&2.511e-7\\
\hline 
3,10&$|\Im\alpha-\Im\beta|$&$10X^3+6X^2-6X+1$&2.403e-2\\
3,20&$|\Im\alpha-\Im\beta|$&$19X^3+9X^2-19X+5$&5.082e-3\\
4,10&$|\Im\alpha-\Im\beta|$&$10X^4+X^3+10$&6.250e-5\\
4,20&$|\Im\alpha-\Im\beta|$&$20X^4+X^3+20$&7.813e-6\\
5,10&$|\Im\alpha-\Im\beta|$&$5X^5-8X^4+6X^3+5X^2-5X+8$&1.061e-7
\\
\hline
\end{tabular}
\caption{Record polynomials for small degree and height}
\label{tab:records}
\end{table}

The results of an exhaustive search are
displayed in Table~\ref{tab:records}. It appears that the classical
separation seems to be larger than the three other ones (absolute
separation, separation of the real or imaginary parts), whose orders
of growth seem similar.
% Also, at least in
% those small sizes, the asymptotic bounds look pessimistic.

\begin{table}
\centerline{\begin{tabular}{rlc}
  \hline\\[-1.8ex]
  $\hfil P(X)\hfil$ & abssep & $-\frac{\log
  \operatorname{abssep}}{\log\ho
  (P)}$ \\[1ex]
  \hline\\[-1.8ex]
  $2 X^3+X^2-X-1$ & 5.309e-2 & 4.24 \\
  $13 X^3+11 X^2+8 X+5$ & 3.462e-5 & 4.00 \\
  $8 X^3+7 X^2-9 X-17$ & 2.038e-5 & 3.81 \\
  $17 X^3+9 X^2-7 X-8$ & 1.233e-5 & 3.99 \\
  $17 X^3+9 X^2+7 X+8$ & 1.957e-5 & 3.83 \\
  $102 X^3+97 X^2+71 X+40$ & 1.532e-8 & 3.89 \\
  $153 X^3+97 X^2-71 X-60$ & 4.492e-9 & 3.82 \\
  $71 X^3+112 X^2+153 X+181$ & 1.681e-9 & 3.89 \\
  $181 X^3+153 X^2+112 X+71$ & 9.007e-10 & 4.01 \\[.8ex]
  \hline
  $X^4-X^2-2 X-3$ & 8.615e-4 & 6.42 \\
  $X^4-X^2+2 X-3$ & 8.615e-4 & 6.42 \\
  $3 X^4+3 X^3+X^2-2 X-4$ & 4.585e-5 & 7.21 \\
  $4 X^4+2 X^3-X^2-3 X-3$ & 3.655e-5 & 7.37 \\
  $5 X^4+3 X^3+X^2-X-3$ & 5.893e-5 & 6.05 \\
  $X^4+6 X^3-7 X^2-5 X+6$ & 2.276e-6 & 6.68 \\
  $6 X^4+5 X^3-7 X^2-6 X+1$ & 2.497e-6 & 6.63 \\
  $11 X^4+7 X^3-X^2-10 X-16$ & 2.671e-8 & 6.29 \\
  $16 X^4+10 X^3+X^2-7 X-11$ & 2.266e-8 & 6.35 \\
  $3 X^4+6 X^3-4 X^2+3 X-18$ & 1.799e-8 & 6.17 \\
  $18 X^4+3 X^3+4 X^2+6 X-3$ & 1.095e-8 & 6.34 \\
  $X^4+40 X^3+11 X^2-14 X-55$ & 3.384e-11 & 6.02 \\
  $55 X^4+14 X^3-11 X^2-40 X-1$ & 2.724e-11 & 6.07 \\
  \hline
  $X^5+X^4+2 X^3+3 X^2-2 X+2$ &2.697e-5 & 9.58 \\
  $2 X^5+X^4-X^3+X^2-X-1$ & 1.051e-3 & 9.89 \\
  $2 X^5+X^4+X^3-2 X^2+2 X-2$ & 2.790e-3 & 8.49 \\
  $2 X^5+X^4+X^3-X^2-X-1$ & 3.800e-3 & 8.04 \\
  $2 X^5+X^4+2 X^3-2 X^2+X-2$ & 2.130e-3 & 8.88 \\
  $2 X^5+X^4+2 X^3-X^2-X-2$ & 3.350e-3 & 8.22 \\
  $2 X^5+X^4+2 X^3+2 X^2-2 X-2$ & 2.130e-3 & 8.88 \\
  $2 X^5+2 X^4+X^3-X^2-2$ & 6.473e-4 & 10.59 \\
  $4 X^5+2 X^4-4 X^3+3 X-2$ & 1.463e-6 & 8.03\\
  $8 X^5+5 X^4-4 X^3+4 X^2-5 X-4$ & 5.185e-8 & 8.07 \\
  \hline
\end{tabular}}
\caption{Polynomials with small absolute separation}
\label{table:records2}
\end{table}

For the same degrees and larger height, a fully exhaustive search
becomes too time-consuming. Instead, we performed extensive
experiments. The resulting record values are reported in Table~%
\ref{table:records2}.
For degree 3 (resp. degree
4, degree 5), we computed up to height 200 (resp. height 120, height
30). As expected, polynomials with small separation
tend to have larger height. To balance this bias, according to the
form of the bounds on the separation, we filter the polynomials of
interest by their \emph{quality}, defined as $-\ln(S)/\ln(H)$ for
a polynomial of height $H$ and separation $S$, and display only
polynomials of ``high quality''.

Observing that cubic polynomials of high quality in Table~%
\ref{table:cubic}, namely $13 x^3+11 x^2+8
x+5$ and
$181 x^3+153
x^2+112 x+71$, have roots in similar locations pushed us to refine our
search in this vicinity and eventually led us to an
unexpected family leading to a proof of optimality in the next
section.

For polynomials of degree 4 up to height 120, the results in
Table~\ref{table:records2} seem to suggest a bound of at most $O(\ho
(P)^
{-6})$, which
should be reached by a real-complex gap, as the complex-complex gap
only gives $O(\ho(P)^{-3})$. This is far from the theoretical
bound $O(\ho(P)^{-12})$ from Theorem~\ref{thm:main}. The best
exponent
we obtain by perturbations is~$-5$, see \S\ref{sec:deg5}.

For polynomials of degree 5, we only have results up to height 30
and even those have been obtained by focusing in
some cases of real-complex gap. Here, the bound on the
height seems to be too small to observe a family. We have two
polynomials with good quality that are relatively similar, namely $2
x^5+x^4-x^3+x^2-x-1$ and $8 x^5+5 x^4-4 x^3+4 x^2-5 x-4$. No other polynomial close to these two are observed with height at most 30.

\subsection{The case of degree~3}

\begin{table}
\centerline{\begin{tabular}{rrlc}
  \hline\\[-1.8ex]
  $n$&height& abssep & $-\frac{\log
  \operatorname{abssep}}{\log\ho
  (P)}$ \\[1ex]
  \hline\\[-1.8ex]
2&12&5.093e-3&2.12\\
5&123&2.447e-6&2.68\\
10&2,340&4.643e-11&3.36\\
20&1,694,157& 1.690e-23& 3.66\\
50& 642,934,702,584,732& 8.146e-58& 3.86\\
\hline
\end{tabular}}\caption{Absolute separation for cubic polynomials~$P_n$
from
Proposition~\ref{prop:deg3}}\label{table:cubic}
\end{table}

\begin{proposition}\label{prop:deg3} The family of cubic polynomials
\[        P_n(X)= p_n(3X^3-2X^2+4X-6)+6q_n(X^3-X^2+1)\in\mathbb{Z}[X],\]
where $(p_n/q_n)_n$ is the sequence of convergents of
the
continued
fraction expansion of~$\sqrt{3}$,
has the property that 
\[\abssep(P_n)\ll\ho(P_n)^{-4},\quad n\rightarrow\infty.\]
\end{proposition}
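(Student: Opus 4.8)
The plan is to analyze the roots of $P_n$ directly, using the special structure of the construction. First I would observe that $P_n(X) = (3p_n + 6q_n)X^3 - (2p_n + 6q_n)X^2 + 4p_n X + (-6p_n + 6q_n)$, and I would examine what happens as $n\to\infty$. Since $(p_n/q_n)$ converges to $\sqrt{3}$, the ratio $p_n/q_n\to\sqrt3$, so after dividing through by $q_n$ the polynomial $P_n/q_n$ converges coefficientwise to the limiting polynomial
\[
P_\infty(X) = (3\sqrt3+6)X^3 - (2\sqrt3+6)X^2 + 4\sqrt3\,X + (-6\sqrt3+6).
\]
The key point of the construction is surely that $P_\infty$ has two roots with equal absolute value — that is, the combination $3X^3-2X^2+4X-6$ and $6(X^3-X^2+1)$ were chosen precisely so that at the limit value $p_n/q_n=\sqrt3$ one gets $\abssep(P_\infty)=0$. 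So the first concrete task is to verify this: compute the roots of $P_\infty$ and check that two of them, say a real root and a complex-conjugate pair (or some such configuration), satisfy $|\alpha_\infty|=|\beta_\infty|$.

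Next I would set up a perturbation analysis. Writing $\sqrt3 - p_n/q_n = \delta_n$, the coefficients of $P_n/q_n$ differ from those of $P_\infty$ by $O(\delta_n)$. Since the roots of a polynomial are (locally, away from multiple roots) analytic functions of the coefficients, the roots $\alpha_M,\beta_M$ of $P_n$ differ from the corresponding roots of $P_\infty$ by $O(\delta_n)$, and hence $\bigl||\alpha_M|-|\beta_M|\bigr| = O(\delta_n)$, provided the two roots of $P_\infty$ that collide in absolute value are simple. The quantitative input is the classical theory of continued fractions: for the convergents of $\sqrt3$ one has $|\sqrt3 - p_n/q_n| \ll q_n^{-2}$, i.e. $\delta_n \ll q_n^{-2}$. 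Meanwhile the height satisfies $\ho(P_n)\asymp q_n$, since all coefficients are fixed integer combinations of $p_n$ and $q_n$ with $p_n\asymp q_n$. Combining, $\bigl||\alpha_M|-|\beta_M|\bigr|\ll q_n^{-2}\ll \ho(P_n)^{-2}$, which only gives exponent $-2$ and is not yet good enough.

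To reach the exponent $-4$, the perturbation must vanish to higher order: the leading $O(\delta_n)$ term in $\bigl||\alpha_M|^2-|\beta_M|^2\bigr|$ must cancel, leaving an $O(\delta_n^2)$ contribution. I expect this is exactly why the specific linear combination was engineered — so that the first derivative of $|\alpha|^2-|\beta|^2$ with respect to the perturbation parameter vanishes at $\sqrt3$. The hard part, and the main obstacle, is therefore to prove this second-order vanishing: I would expand $|\alpha_M|^2$ and $|\beta_M|^2$ as power series in $\delta_n$ using implicit differentiation of $P_n(\alpha_M)=0$, show the linear coefficients agree, and conclude $\bigl||\alpha_M|^2-|\beta_M|^2\bigr|\ll\delta_n^2\ll q_n^{-4}$. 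Since $|\alpha_M|,|\beta_M|$ stay bounded away from $0$ and $\infty$ (by Proposition~\ref{prop2} and its reciprocal), dividing by $|\alpha_M|+|\beta_M|$ preserves the order and yields $\bigl||\alpha_M|-|\beta_M|\bigr|\ll q_n^{-4}\ll\ho(P_n)^{-4}$, completing the proof. The delicate verification of the first-order cancellation — equivalently, checking that the chosen polynomial combination sits on the relevant tangency locus — is where essentially all the content lies; everything else is the standard perturbation-plus-continued-fraction machinery.
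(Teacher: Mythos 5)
Your proposal follows essentially the same route as the paper: the paper introduces the bivariate polynomial $P(X,Y)=\left(\tfrac12X^3-\tfrac13X^2+\tfrac23X-1\right)Y+X^3-X^2+1$, notes that at $Y=\sqrt3$ all three roots have absolute value $\sqrt3-1$, checks that under $Y=\sqrt3+\epsilon$ the first-order terms of the absolute values of the real root and the complex pair coincide (both equal to $(2-\sqrt3)\epsilon$) so the gap is $O(\epsilon^2)$, and then uses $|p_n/q_n-\sqrt3|<q_n^{-2}$ together with $\ho(P_n)\asymp q_n$ — exactly your first-order-cancellation-plus-continued-fractions plan. The only caveat is that you leave the decisive cancellation as a computation to be done rather than carrying it out, but the paper's own proof treats it the same way (``it is readily checked''), so there is no gap in substance.
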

Theorem~\ref{thm:main} shows that $\abssep(P_n)\gg \ho(P_n)^{-4}$, so
 that the exponent~$-4$ is optimal in degree~3. The
 absolute separations of a
 few polynomials in
 that family are displayed in Table~\ref{table:cubic}. The height of
 the polynomials~$P_n$ increases exponentially with~$n$.

\begin{proof}
It is readily checked that the bivariate polynomial
\begin{equation}\label{eq:nicecubicpol}
        P(X,Y)= \left
(-\frac13X^2+\frac12X^3+\frac23X-1\right)Y+X^3-X^2+1
\end{equation}
is such that for $Y=\sqrt{3}$, its 3 roots have absolute value
exactly $\sqrt{3}-1$.
A small perturbation $Y=\sqrt{3}+\epsilon$ sends the real root to 
\[         \sqrt3-1+(2-\sqrt3)\epsilon+O(\epsilon^2),\] 
while the complex roots have absolute value
\[           \sqrt3-1+(2-\sqrt3)\epsilon+O(\epsilon^2), \]
with a different constant in the $O()$ term so that the difference of
the absolute values is $O(\epsilon^2)$.
Evaluating $Y$ at a convergent $p/q$ of the
continued
fraction
expansion of $\sqrt3$ leads to  $\epsilon=|p/q-\sqrt3|<1/q^2$, so
that the difference of the absolute values of roots is $O(q^{-4})$.
The
polynomial with integer coefficients obtained by normalizing $P
(X,p/q)$ has coefficients growing asymptotically like~$q$, giving $-4$
as
a bound in the exponent.
\end{proof}

\paragraph{Further experiments}
In an unsuccessful attempt to generalize the nice family of
cubic polynomials in Proposition~\ref{prop:deg3} to higher degrees, we analyzed
more precisely the
properties of the bivariate polynomial of Equation~\eqref{eq:nicecubicpol}.
A polynomial $a_3X^3+\dots+a_0$ with simple roots, all of the same
nonzero absolute value, has coefficients that satisfy
\begin{equation}\label{eq:cond-eqmod}
a_1^3a_3-a_0a_2^3=0.
\end{equation}
The set of such polynomials is therefore contained in an algebraic set
of dimension~3 in~$\mathbb{R}^4$. The question is to find a good point
on this set whose perturbations behave well with respect to the
absolute separation.

Polynomials in that set factor as $(a_2X+a_1)$ times a quadratic
polynomial, whose discriminant has to be negative for its roots to
have identical absolute value. This gives necessary and sufficient
conditions: $a_1a_2<0\le a_1a_3\le a_2^2\le 3a_1a_3$. 
These conditions define a region on our algebraic set where all
polynomials have 3~roots of identical absolute value.

A very special property possessed by the polynomial of Equation~%
\eqref{eq:nicecubicpol} is that in its case, the polynomial from
Equation~\eqref{eq:cond-eqmod} factors as a square~$(1-Y^2/3)^2$.
Adding this condition and forcing the perturbation of~$Y$ to cancel
the constant and linear coefficients of the expansion of the absolute
separation finally leads to a 3-dimensional set of polynomials of
which that of \eqref{eq:nicecubicpol} is an instance:
\[a(X^3+10c^3)+b(3X^2+6cX)-((X^2+4cX)b+6ac^3)\sqrt{3}.\]
Polynomials in that family all have three roots of identical absolute
value,
and replacing $\sqrt{3}$ by convergents to its continued fraction
expansion lead to an exponent~4 for their asymptotic absolute
separation. 

\subsection{Degrees 4 and 6}

For $d\in\{4,6\}$, we consider the polynomial
$P=M(X^d-1)-Q_d(X)$,
with $M$ an integer and $Q_d$ a
polynomial of degree $d-1$ to be made precise later.
As $M$ tends to infinity, the roots of $P$ tend to those of $X^d-1$.
In particular, one of them tends to~$-1$ and another one tends to $c_d$
a root of the $d$-th cyclotomic polynomial. The asymptotic expansion
in powers of $1/M$ of the difference between the absolute values of
these
roots can be obtained as
follows. First the equation $P=0$ is rewritten as
\begin{equation}\label{eq:perturb}
\frac{X^d-1}{Q_d(X)}=\frac1M.
\end{equation}
In the neighbourhood of a root $\omega$ of $X^d-1$, the 
left-hand side behaves like
\[\frac{d}{\omega Q_d(\omega)}(X-\omega)+\left({\frac{d(d-1)}
{2\omega^2Q_d(\omega)}}-
\frac{dQ_d'(\omega)}{\omega Q_d(\omega)^2}\right)(X-\omega)^2+\dotsb.
\]
Power series inversion then gives the asymptotic behaviour of the
corresponding root of Equation~\eqref{eq:perturb}:
\[X_\omega=\omega+\omega Q_d(\omega)\frac1{dM}+\left
({\omega^2Q_d(\omega)Q_d'(\omega)}-\frac{(d-1)}2\omega Q_d
(\omega)^2\right)\frac1{d^2M^2}+\dotsb\]
Substituting $\omega$ by $1/\omega$ gives the expansion of
the conjugate root and multiplying them gives the expansion of
$|X_\omega|^2$. Finally, subtracting the values of this expansion for
$\omega=-1$ and $\omega=c_d$ gives an expansion of the distance
between the squares of these absolute values
with coefficients that are
polynomials in $c_d$ and in the coefficients of $Q_d$.
Cancelling those coefficients up to order $1/M^d$ gives a system
of $d$ equations in the $d$ coefficients of $Q$.
Up to multiplying $M$ by a constant, when $d\in\{4,6\}$, there is
only one case when this system has
\emph{integer} solutions that do not correspond to $Q_d$ having a
common
factor
with $X^d-1$, leading to the following. 
\begin{proposition}\label{prop:deg4-6}
Let $d\in\{4,6\}$ and let $M$ be a positive integer. Consider the
polynomials
$P_{d,M}$ of degree $d$ defined by 
\[P_{d,M}=M(X^d-1)-Q_d(X),\ \text{with}\ \begin{cases}
Q_4(X)=X^3-X^2+X-5,\\
Q_6(X)=9X^5-9X^4-26X^3-9X^2+9X-28,
\end{cases}\]
As $M$ tends to infinity, these polynomials have height~$M$ and two
roots $\alpha,\beta$ satisfying
\[0<\bigl||\alpha|-|\beta|\bigr|\ll \ho(P_{d,M})^{-d-1}.\]
\end{proposition}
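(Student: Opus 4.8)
The plan is to turn the perturbative sketch above the statement into a rigorous argument, treating the two relevant roots of $P_{d,M}$ as analytic functions of the small parameter $t:=1/M$ and showing that, for the stated $Q_d$, the gap between the squares of their absolute values vanishes to order exactly $d+1$ in $t$. First I would record that, for the explicit $Q_4=X^3-X^2+X-5$ and $Q_6=9X^5-9X^4-26X^3-9X^2+9X-28$, no $d$-th root of unity is a root of $Q_d$ (a finite check at the roots of $X^d-1$); this guarantees that no root of $P_{d,M}$ stays fixed as $M$ varies and that the rewriting $(X^d-1)/Q_d(X)=t$ of the equation $P_{d,M}=0$ is valid near each root $\omega$ of $X^d-1$.

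Near such an $\omega$ the map $f(X)=(X^d-1)/Q_d(X)$ vanishes simply, with nonzero derivative $f'(\omega)=d\omega^{d-1}/Q_d(\omega)=d/(\omega Q_d(\omega))$, so power-series inversion (equivalently, the analytic implicit function theorem) produces a unique branch $X_\omega(t)=\omega+O(t)$, convergent for $t$ small, whose coefficients are explicit polynomials in $\omega$ and in the coefficients of $Q_d$; I would compute it to order $t^{d+1}$ exactly as displayed above the statement. Because $P_{d,M}$ has real coefficients, conjugation sends the branch at $\omega$ to the branch at $\bar\omega=1/\omega$, so for real $t$ one has $|X_\omega(t)|^2=X_\omega(t)\,X_{1/\omega}(t)$, a series obtained by multiplying two known expansions.

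Taking $\omega=-1$ (where $1/\omega=\omega$, giving a real root) and $\omega=c_d$, a primitive $d$-th root of unity, I would then form
\[
    D(t):=|X_{-1}(t)|^2-|X_{c_d}(t)|^2=\sum_{k\ge 1} e_k\,t^k,
\]
where $e_0=0$ automatically since both absolute values equal $1$ at $t=0$. Each $e_k$ is symmetric in $c_d,\bar c_d$, hence a polynomial in $c_d+\bar c_d$ (which is $0$ for $d=4$ and $1$ for $d=6$) and in the coefficients of $Q_d$, with \emph{rational} coefficients. The crux is the finite symbolic verification that, upon substituting the stated $Q_d$, one has $e_1=\dots=e_d=0$ while $e_{d+1}\neq 0$; the $d$ vanishing conditions are exactly the system solved to single out $Q_4$ and $Q_6$.

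Finally, from $D(t)=e_{d+1}t^{d+1}+O(t^{d+2})$ with $e_{d+1}\neq 0$, the series $D$ is not identically zero and has only isolated zeros, so for all large $M$ the roots $\alpha=X_{-1}(1/M)$ and $\beta=X_{c_d}(1/M)$ satisfy $|\alpha|^2-|\beta|^2=\Theta(M^{-(d+1)})\neq 0$. Since $|\alpha|+|\beta|\to 2$, dividing out gives $0<\bigl||\alpha|-|\beta|\bigr|=\Theta(M^{-(d+1)})$. As $\ho(P_{d,M})=\Theta(M)$ (indeed equal to $M$ for $M$ large, the extreme coefficients being $M$ and $-M-Q_d(0)$ and the rest bounded), this yields $0<\bigl||\alpha|-|\beta|\bigr|\ll\ho(P_{d,M})^{-d-1}$. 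The main obstacle is not conceptual but one of bookkeeping: carrying the inversion and the product of series far enough, to order $t^{d+1}$, to certify simultaneously the $d$ vanishings and the non-vanishing of $e_{d+1}$ for each of the two explicit polynomials.
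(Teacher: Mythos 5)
Your framework---analytic branches $X_\omega(t)$ at the roots $\omega$ of $X^d-1$ obtained by inverting $(X^d-1)/Q_d(X)=t$, the identity $|X_\omega(t)|^2=X_\omega(t)\,X_{1/\omega}(t)$, a finite symbolic check that the first $d$ coefficients of the difference cancel while the $(d+1)$-st does not, and the final division by $|\alpha|+|\beta|\to 2$---is exactly the paper's argument made more explicit, and all of that scaffolding is sound. The genuine gap is that you have committed to the wrong pair of roots, so the ``finite symbolic verification'' at the crux of your proof would come out false. You take $\alpha$ near $\omega=-1$ and $\beta$ near a primitive $d$-th root of unity $c_d$. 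But already at first order the coefficient of $t$ in $|X_\omega(t)|^2$ is $\bigl(Q_d(\omega)+Q_d(1/\omega)\bigr)/d$, which for $d=4$ equals $2Q_4(-1)/4=-4$ at $\omega=-1$ and $\bigl(Q_4(i)+Q_4(-i)\bigr)/4=-2$ at $\omega=i$; hence for your pair $e_1=2\neq0$ and the gap is $\Theta(M^{-1})$, nowhere near $O(M^{-5})$. The cancellation to order $d+1$ occurs for the root near $+1$ together with the root near $i$ when $d=4$, and for the root near $+1$ together with the root near a primitive \emph{cube} root of unity (a root of $X^2+X+1$, not of the sixth cyclotomic polynomial) when $d=6$; the paper even remarks that perturbing the primitive sixth roots $(1\pm i\sqrt3)/2$ yields a system with no integer solutions, so your choice of $c_6$ cannot be rescued by adjusting $Q_6$. (You were arguably misled by the paper's own introductory sentence about ``$-1$ and a root of the $d$-th cyclotomic polynomial''; its actual proof expands the roots $z_1$ and $z_i$, resp.\ $z_1$ and $z_{(-1+i\sqrt3)/2}$.) With the correct pair substituted, the rest of your argument goes through as written.
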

\begin{table}
\centerline{\begin{tabular}{rrlc}
  \hline\\[-1.8ex]
  $d$&height& abssep & $-\frac{\log
  \operatorname{abssep}}{\log\ho
  (P)}$ \\[1ex]
  \hline\\[-1.8ex]
4& 10& 3.716e-5& 4.43\\
4& 20& 4.183e-7& 4.90\\
4& 50& 2.653e-9& 5.05\\
4& 100& 7.175e-11& 5.07\\
4& 500& 2.055e-14& 5.07\\
4& 1000& 6.335e-16& 5.07\\
\hline
6& $10^2$& 3.336e-7& 3.24\\
6& $10^3$& 1.373e-14& 4.62\\
6& $10^4$& 1.267e-21& 5.22\\
6& $10^5$& 1.257e-28& 5.58\\
6& $10^{10}$& 1.256e-63& 6.29\\
6& $10^{20}$& 1.256e-133& 6.65\\
6& $10^{30}$& 1.256e-203& 6.76\\
\hline
\end{tabular}}\caption{Absolute separation for polynomials~$P_{d,M}$
from
Proposition~\ref{prop:deg4-6}}\label{table:deg4-6}
\end{table}
\noindent Absolute separations for a few polynomials in
these families are given in Table~\ref{table:deg4-6}.
\begin{proof}Once these polynomials have been found, the proof can be
carried out by hand (and more easily with the help of a computer
algebra system). The polynomial $P_{4,M}$ has roots
\begin{align*}
z_1&=1-\frac1M-\frac2{M^2}-\frac{11}{2M^3}-\frac{71}{4M^4}+O
\left(\frac1{M^5}\right),\\
z_i&=i-\frac{i}{M}-\frac{1+4i}{2M^2}-\frac{4+11i}{2M^3}-
\frac{66+143i}{8M^4}+O\left(\frac1{M^5}\right).
\end{align*}
Taking the absolute values and subtracting shows that
\[|z_1|-|z_2|\sim\frac{|z_1|^2-|z_i|^2}2=O\left(\frac1{M^5}\right).\]
The same reasoning applies to the polynomial $P_{6,M}$, with roots
\begin{align*}
z_1&=1-\frac{19}{3M}-\frac{1235}{18M^2}-\frac{240445}{162M^3}+\dotsb\\
z_{\frac{-1+i\sqrt{3}}2}&=\frac{-1+i\sqrt3}2-\frac{4(1-i\sqrt3)}
{3M}-\frac{238-166i\sqrt3}{9M^2}-\frac{33383-13511i\sqrt{3}}
{81M^3}+\dotsb
\end{align*}
leading to
\[|z_1|-\left|z_{\frac{-1+i\sqrt{3}}2}\right|\sim
\frac{|z_1|^2-\left|z_{\frac{-1+i\sqrt{3}}2}\right|^2}2=O\left(\frac1
{M^7}\right).\]
\end{proof}
\noindent Perturbations of the other roots, namely $(1\pm i
\sqrt{3})/2$, lead to
another system of equations for the coefficients of~$Q_6$, but this
system does not have integer solutions.

\paragraph{Further experiments}
Looking for similar polynomials using different products of cyclotomic polynomials also yields
\[M(X^2-1)(X^2+X+1)-X^3+3X+4\]
with absolute separation in $\ho^{-5}$.

For $d=8$, the same computation only produces an exponent
$-d+4$: the polynomial system obtained when trying to cancel one more
coefficient does not have any integer solution.

For odd degree $d$, the use of cyclotomic polynomials in this method
only seems to reach exponent $d$ instead of $d+1$, with examples like
\[M(X^2+1)(X-1)+X^2-2X+3,\quad M(X^2-X+1)(X-1)+2X^2-4X-3,\]
in degree~3 and
\begin{align*}
&M(X-1)(X^2+X+1)(X^2-X+1)+3X^4-2X^3+5X^2-4X+7,\\
&M(X-1)(X^2+1)(X^2-X+1)+2X^4-8X^3+13X^2-12X+7
\end{align*}
in degree~5.

\subsection{Degree 5}\label{sec:deg5}
For polynomials of degree 5, exhaustive search cannot reach heights
sufficient to exhibit the beginning of an asymptotic behaviour.

The perturbative approach remains possible, although the polynomial
systems become rapidly too large for the Gr\"obner basis engine we
use, which is Faug\`ere's FGb~\cite{Faugere2010}. We consider the
polynomial $P=MR_5(X)-Q_5(X)$, with
\begin{align*}
R_5(X) &= (X^2 + aX + r^2)(X^2 + bX + r^2), \\
Q_5(X) &= X^5 + q_4 X^4 + q_3 X^3 + q_2 X^2 + q_1 X + q_0,
\end{align*}
where all coefficients are unknowns, and expected to take rational
values, with $a$ and $b$ in the interval $(-2r,2r)$ so that $R_5$ has
four roots of absolute value~$|r|$. As in the previous case, the
equation
$P=0$ rewrites
\[\frac{R_5(X)}{Q_5(X)}=\frac1M.\]
Expanding the left-hand side in the neighbourhood of solutions of
the factors of $R_5$ and inverting the power series expansion gives
expansions of the corresponding perturbed roots $X_1(\epsilon), X_2
(\epsilon)$ of $P$. From there, the coefficients
$c_{1,k}, c_{2,k}$ of $|X_1(\epsilon)|^2$ and $|X_2(\epsilon)|^2$ for
$k$ from $1$ to $5$ are obtained. These coefficients are polynomials
in $r, a, b$ and $q_i$ for $0 \leq i \leq 4$. The polynomial system $
\{ c_{1,k} = c_{2,k}, 1 \leq k \leq 5\}$  is unfortunately too big for
a direct computation by Gr\"obner bases. Instead, we run a loop 
over possible \emph{integer} values of $r, a, b$ in a given search
range, and use Gr\"obner bases to determine whether each
specialized system  has (not necessarily integral) solutions
and to determine the solution. If the solution happens to be rational,
this gives a family with separation $O(\ho(P)^{-6})$. This is how the
following family was found.
\begin{proposition}\label{prop:deg5}Let $M$ be a positive integer and
$P_{5,M}$ be
defined by
\[P_{5,M}=MR_5(X)-Q_5(X),\ \text{with}\ 
\begin{cases}
R_5(X)=(X^2-9X+36)(X^2-11X+36),\\
Q_5(X)=X^5-213X^3+2404X^2-11088X+20736.
\end{cases}
 \]
As $M$ tends to infinity, this polynomial has height $O(M)$ and two
roots $\alpha,\beta$ satisfying
\[0<\bigl||\alpha|-|\beta|\bigr|\ll\ho(P_{5,M})^{-6}.\]
\end{proposition}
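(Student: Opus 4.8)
The plan is to follow the perturbative scheme already used for Proposition~\ref{prop:deg4-6}, now with the quartic $R_5$ in the role of $X^d-1$ and with $\epsilon=1/M$ as the perturbation parameter. First I would record the relevant features of $R_5$: each factor $X^2-9X+36$ and $X^2-11X+36$ has negative discriminant ($81-144$ and $121-144$) and constant term $36$, so its two complex conjugate roots have absolute value exactly $6$; moreover each quadratic is self-reciprocal under $X\mapsto 36/X$, which means that if $\omega$ is a root then its conjugate is $\bar\omega=36/\omega$. All four roots of $R_5$ are simple, the two quadratics being coprime. Finally, expanding $P_{5,M}=MR_5-Q_5$ shows that its coefficients are affine in $M$ with dominant term $1296M$ in the constant coefficient, so $\ho(P_{5,M})\sim 1296\,M$; in particular $\ho(P_{5,M})=O(M)$ and $M=O(\ho(P_{5,M}))$, so an estimate in $M^{-6}$ is equivalent to one in $\ho(P_{5,M})^{-6}$.

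Next I would rewrite $P_{5,M}=0$ as $R_5(X)/Q_5(X)=\epsilon$ and, in the neighbourhood of each simple root $\omega$ of $R_5$, invert the power series to obtain the perturbed root $X_\omega(\epsilon)=\omega+\epsilon\,Q_5(\omega)/R_5'(\omega)+\dotsb$ as a series in $\epsilon$ whose coefficients are explicit rational expressions in $\omega$ and the coefficients of $Q_5$. Because $P_{5,M}$ has real coefficients and $\epsilon$ is real, the branch near $\bar\omega=36/\omega$ is $\overline{X_\omega(\epsilon)}$, so $|X_\omega(\epsilon)|^2=X_\omega(\epsilon)\,\overline{X_\omega(\epsilon)}$ is obtained by multiplying the expansion at $\omega$ by the one with $\omega$ replaced by $36/\omega$; the product is a series with rational coefficients depending only on the symmetric data $\omega+\bar\omega\in\{9,11\}$ and $\omega\bar\omega=36$ of the conjugate pair. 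Writing $|X_1(\epsilon)|^2=\sum c_{1,k}\epsilon^k$ and $|X_2(\epsilon)|^2=\sum c_{2,k}\epsilon^k$ for the two series attached to the root pairs of $X^2-9X+36$ and $X^2-11X+36$, both start with $c_{1,0}=c_{2,0}=36$.

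The heart of the argument is the verification --- best carried out with a computer algebra system, exactly as in the proof of Proposition~\ref{prop:deg4-6} --- that the specific $Q_5$ in the statement forces $c_{1,k}=c_{2,k}$ for $1\le k\le 5$ while $c_{1,6}\ne c_{2,6}$. The first five cancellations hold by construction, since $\{c_{1,k}=c_{2,k}:1\le k\le 5\}$ is precisely the system $Q_5$ was produced to solve; the step that genuinely needs checking is the nonvanishing of the order-$6$ term, which is what guarantees that the two absolute values are actually distinct. Granting this,
\[
|X_1(\epsilon)|^2-|X_2(\epsilon)|^2=(c_{1,6}-c_{2,6})\,\epsilon^6+O(\epsilon^7),
\]
which is nonzero for all large $M$ because $c_{1,6}\ne c_{2,6}$ and $\epsilon=1/M$.

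Finally I would convert this into the claimed bound. Taking $\alpha=X_1(1/M)$ and $\beta=X_2(1/M)$, both of modulus tending to $6$, the identity
\[
\bigl||\alpha|-|\beta|\bigr|=\frac{\bigl||\alpha|^2-|\beta|^2\bigr|}{|\alpha|+|\beta|}
\]
together with $|\alpha|+|\beta|\to 12$ gives $0<\bigl||\alpha|-|\beta|\bigr|=O(M^{-6})$, hence $O\!\bigl(\ho(P_{5,M})^{-6}\bigr)$ since $\ho(P_{5,M})\asymp M$. I expect the main obstacle to be purely computational: the series inversion and the product must be pushed to order $\epsilon^6$, and establishing $c_{1,6}\ne c_{2,6}$ --- equivalently, that the chosen $Q_5$ does not accidentally also solve the order-$6$ equation and shares no factor with $R_5$ --- is the one place where the particular integer data $r=6$, $a=-9$, $b=-11$ enter in an essential way.
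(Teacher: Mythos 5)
Your proposal is correct and follows essentially the same route as the paper, which for this proposition simply states that ``given the polynomials, the proof is a direct computation of the asymptotic absolute separation'' via the perturbative scheme of \S\ref{sec:deg5}: rewrite $P_{5,M}=0$ as $R_5(X)/Q_5(X)=1/M$, expand the perturbed roots, and check that the coefficients of $|X_1(\epsilon)|^2$ and $|X_2(\epsilon)|^2$ agree up to order $5$ but not at order $6$. Your additional observations (the roots of each quadratic factor have modulus exactly $6$, the height is $\asymp 1296M$, and the division by $|\alpha|+|\beta|\to 12$ to pass from the difference of squares to the difference of absolute values) correctly fill in the details the paper leaves implicit.
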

\begin{table}
\centerline{\begin{tabular}{rlc}
  \hline\\[-1.8ex]
  height& abssep & $-\frac{\log
  \operatorname{abssep}}{\log\ho
  (P)}$ \\[1ex]
  \hline\\[-1.8ex]
$10^{10}$& 7.165e-38& 3.81\\
$10^{20}$& 7.164e-98& 4.91\\
$10^{50}$& 7.164e-278& 5.56\\
$10^{100}$&7.164e-578& 5.78\\
$10^{200}$&7.164e-1178& 5.89\\
$10^{500}$&7.164e-2978& 5.96\\
\hline
\end{tabular}}\caption{Absolute separation for polynomials~$P_{5,M}$
from
Proposition~\ref{prop:deg5}}\label{table:deg5}
\end{table}

Again, given the polynomials, the proof is a direct computation of the
asymptotic absolute separation. Absolute separations for a few
polynomials in this family are given in Table~\ref{table:deg5}.

Note that actually, we get two one-parameter families of such
polynomials with the same $R_5$ as above and
\begin{align*}
Q_5(X)&=X^5+qX^4-(20q+229)X^3+(2700+171q)X^2\\
&\qquad\qquad\qquad\qquad-(13104+720q)X
+(25920+1296q),\\
Q_5(X)&=X^5+qX^4-(20q+213)X^3+(2404+171q)X^2\\
&\qquad\qquad\qquad\qquad-(11088+720q)X+(20736+1296q).
\end{align*}
The family in the proposition is the special case $q=0$ of the second
one.

\section{Conclusion}
From our experiments in low degree, it would be
tempting to conjecture that a tight bound on the exponent of the
absolute separation in
degree~$d$ is $-d-1$, but we have not been able to prove this, even
for
degrees~7 or~8. Even the degree of the exponent of our lower bounds in
Theorem~\ref{thm:main} seems too high.
The bounds in Theorem~\ref{thm:main} are derived
by considering arbitrary triples or quadruples of roots, without
taking into account that some of them are conjugate, so this
approach could make them pessimistic. More deeply, the cubic
exponent seems to be inevitable for a purely algebraic approach: if
$P=a_0+\dots+a_dz^d$ is written $a_d\prod(z-(x_k+iy_k))$, then
$x_k$, $y_k$ and $x_k^2+y_k^2$ are generically algebraic of degree $d
(d-1)/2$ over
$\mathbb{Q}(a_0,\dots,a_d)$, with minimal polynomials having degree
$d-1$ in the $a_i$'s, whence again a cubic exponent.
Obtaining better bounds thus probably needs more analytic tools.

Another question which is of a more analytic nature and more directly
relevant for the complexity of algorithms on linear recurrences would
be to determine a bound on the minimal distance between the
two \emph{largest} distinct absolute
values of the roots. For any polynomial, this quantity is
at least as large as the absolute separation, but does it
have a different asymptotic behaviour?

On a related matter, Koiran recently used analytic arguments (Rolle's
theorem and Baker's theory of linear forms in the logarithms of algebraic numbers) 
to give a bound on the (classical) root separation for trinomials,
with a very small dependency on the degree~\cite{Koiran2019}. It is
not clear to us whether similar results also hold for the absolute
separation.

\section*{Ackowledgements} W.~Fang and B.~Salvy were supported in
part by FastRelax
ANR-14-CE25-0018-01. A.~Dujella and T.~Pejkovi\'c were supported
by the Croatian Science
Foundation under the project no. IP-2018-01-1313 and the QuantiXLie
Center of Excellence, a project co-financed by the Croatian Government
and European Union through the European Regional Development Fund -
the Competitiveness and Cohesion Operational Programme (Grant
KK.01.1.1.01.0004).
Y.~Bugeaud, A. Dujella, T. Pejkovi\'{c}, and B. Salvy were supported
in part by the French-Croatian bilateral 
COGITO project `Approximation diophantienne et courbes elliptiques'.

% \bibliographystyle{acm}
% \bibliography{abssep}

\end{document}